\pdfoutput=1
\documentclass[11pt]{article}

\usepackage[T1]{fontenc}
\usepackage[utf8]{inputenc}
\IfFileExists{lmodern.sty}{\usepackage{lmodern}}{}
\usepackage{amsmath,amssymb,amsthm,mathtools}
\IfFileExists{microtype.sty}{\usepackage[final,expansion=false]{microtype}}{}
\usepackage[letterpaper,margin=1in]{geometry}
\usepackage{xcolor}
\usepackage{hyperref}

\hypersetup{
  colorlinks=true,
  linkcolor=blue!50!black,
  citecolor=blue!50!black,
  urlcolor=blue!50!black,
  pdftitle={Large Sets of Integers with No Harmonic Triples},
  pdfauthor={Samuel Korsky}
}

\allowdisplaybreaks

\theoremstyle{plain}
\newtheorem{theorem}{Theorem}[section]
\newtheorem{lemma}[theorem]{Lemma}

\theoremstyle{remark}
\newtheorem{question}[theorem]{Question}

\newcommand{\F}{\mathbb F}
\newcommand{\AP}{\mathrm{AP}}

\title{Large Sets of Integers with No Harmonic Triples}
\author{Samuel Korsky}
\date{July 5, 2026}

\begin{document}

\maketitle

\begin{abstract}
\noindent
Let $f(N)$ denote the largest size of a set $A\subseteq [N]=\{1,\ldots,N\}$ containing no distinct $a,b,c$ such that
\[
  \frac2a=\frac1b+\frac1c .
\]
We prove
\[
  f(N)\gg N\exp\!\left(-(2\sqrt{\log(24/7)}+o(1))\sqrt{\log\log N}\right).
\]
The construction filters the odd integers up to $N$ by a random affine image of a dense three-term-progression-free set in a prime field $\F_q$ with $q\asymp\log N$, and then deletes a controlled family of collapsed triples.
\end{abstract}

\section{Introduction}

A triple of positive integers is called \emph{harmonic} if the reciprocals of its entries form a three-term arithmetic progression.  In this note we study large subsets of $[N]=\{1,\ldots,N\}$ containing no nontrivial harmonic triple.  More precisely, let $f(N)$ be the maximum cardinality of a set $A\subseteq [N]$ such that there do not exist distinct $a,b,c\in A$ with
\begin{equation}\label{eq:harmonic-equation}
  \frac2a=\frac1b+\frac1c .
\end{equation}

The problem is a reciprocal analogue of the classical question of estimating $r_3(N)$, the largest size of a subset of $[N]$ with no nontrivial three-term arithmetic progression.  The latter problem was introduced by Erd\H{o}s and Tur\'an \cite{ErdosTuran1936} and has been central in additive combinatorics since Behrend's construction \cite{Behrend1946} and Roth's theorem \cite{Roth1953}.  The lower-bound side was developed by Salem and Spencer \cite{SalemSpencer1942}, Behrend \cite{Behrend1946}, Elkin \cite{Elkin2011}, Green and Wolf \cite{GreenWolf2010}, and most recently Elsholtz, Hunter, Proske and Sauermann \cite{EHPS2024}.  The upper-bound side has also seen major progress, including the work of Bloom and Sisask \cite{BloomSisask2020}, Kelley and Meka \cite{KelleyMeka2023}, and Bloom and Sisask's subsequent improvement of the Kelley--Meka bound \cite{BloomSisask2023}.  These upper bounds do not directly apply to \eqref{eq:harmonic-equation}, since the reciprocal map is not an additive map on intervals of integers.

Throughout, all logarithms are natural.  We use the following lower bound of Elsholtz, Hunter, Proske and Sauermann \cite{EHPS2024}.  If $r_3(X)$ denotes the maximum size of a subset of $\{1,\ldots,X\}$ with no nontrivial three-term arithmetic progression, then
\begin{equation}\label{eq:EHPS-input}
  r_3(X)\ge X\exp\!\left(-(C_{\AP}+o(1))\sqrt{\log X}\right),
  \qquad C_{\AP}=2\sqrt{\log(24/7)} .
\end{equation}
The constant in \eqref{eq:EHPS-input} is the natural-logarithm form of the base-two constant appearing in \cite{EHPS2024}.

Our main result transfers this improved Behrend-type lower bound to the harmonic-triple-free setting, with $\log N$ replaced by $\log\log N$.

\begin{theorem}\label{thm:main}
As $N\to\infty$,
\begin{equation}\label{eq:main-bound}
  f(N)\gg N\exp\!\left(-(C_{\AP}+o(1))\sqrt{\log\log N}\right).
\end{equation}
\end{theorem}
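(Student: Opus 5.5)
We plan to follow the construction sketched in the abstract. Fix a prime $q$ with $q\asymp \log N$, say $q\in[\tfrac12 c\log N, c\log N]$ for a small constant $c>0$. By \eqref{eq:EHPS-input}, there is a set $S\subseteq\{1,\ldots,\lfloor (q-1)/2\rfloor\}$ with no nontrivial three-term progression and $|S|\ge q\exp(-(C_{\AP}+o(1))\sqrt{\log q})$. Because $S$ lies in the lower half of the residues, an integer progression $x+z=2y$ with $x,y,z\in S$ cannot wrap around mod $q$. Hence $S$ is also $3$-AP-free in $\F_q$, and its density there is $\exp(-(C_{\AP}+o(1))\sqrt{\log\log N})$.

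The key observation is that, for odd $a,b,c$ with $q\nmid abc$, the harmonic relation $2/a=1/b+1/c$ reduces in $\F_q$ to $2a^{-1}=b^{-1}+c^{-1}$. So $(a^{-1},b^{-1},c^{-1})$ is a $3$-AP in $\F_q$. We choose uniformly random $\lambda\in\F_q^\times$ and $\mu\in\F_q$, and set
\[
A_0=\{n\le N \text{ odd}: q\nmid n,\ \lambda n^{-1}+\mu\in S\}.
\]
The affine map $t\mapsto\lambda t+\mu$ preserves $3$-APs. Hence any harmonic triple in $A_0$ maps to a $3$-AP in $S$, which must be trivial, so $a\equiv b\equiv c \pmod q$. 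For fixed $n$, the value $\lambda n^{-1}+\mu$ is uniform on $\F_q$. Therefore $\mathbb E|A_0|\gg N|S|/q$, which already matches the target density.

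The remaining task is to delete the \emph{collapsed} harmonic triples, that is, those with $a\equiv b\equiv c\pmod q$. I would first parametrize all integer solutions of \eqref{eq:harmonic-equation} with distinct entries: $b=dk\,m(m+n)$, $c=dk\,n(m+n)$, $a=2dk\,mn/\gcd$-type expressions, via the standard parametrization of $2bc=a(b+c)$. The condition $q\mid b-a$ and $q\mid c-a$ then forces $q$ to divide specific factors in this parametrization, such as $(m-n)$ or related linear forms. Since $q\asymp\log N$ is small, this divisibility does not by itself make such triples rare: for a fixed pair $(a,b)$ it costs only a factor of $1/q$.

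So a direct union bound over all triples fails. The count of harmonic triples in $[N]$ is about $N\log N$ or larger, and $N\log N/q^{2}\cdot(\text{density})^{3}$ is not $o(N\cdot\text{density})$. This is the step I expect to be the main obstacle. The first thing I would try is to exploit the random choice of $q$ among the $\asymp \log N/\log\log N$ primes in the window as well as of $(\lambda,\mu)$. A triple whose differences $b-a$ and $c-a$ have gcd $g$ is collapsed only for the few primes $q\mid g$, and $g$ has at most $O(\log N/\log\log N)$ prime factors. Averaging over $q$ therefore gains a large factor. I would then delete one element from each surviving collapsed triple and show that the expected number of deletions is at most half of $\mathbb E|A_0|$.

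If the triple count is still too large, I would restrict to integers $n$ with no unusually small prime factors, or with restricted structure. This reduces the number of harmonic triples, since they need $m,n,m+n$ to divide the entries. Such a restriction costs only a constant or $(\log\log N)^{O(1)}$ factor, which the $o(1)$ in the exponent absorbs, and yields \eqref{eq:main-bound}.
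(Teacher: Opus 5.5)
\textbf{There is a genuine gap.} Your construction matches the paper's: a random affine image of a progression-free set in $\F_q$, applied to the odd $n\le N$ with $q\nmid n$, with every surviving harmonic triple collapsing to $a\equiv b\equiv c\pmod q$. The argument breaks at the one step that matters. You never bound the collapsed triples, and your claim that a direct union bound fails is wrong.

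Use the parametrization of odd harmonic triples $b=(h-d)hm$, $a=(h-d)(h+d)m$, $c=(h+d)hm$ with $1\le d<h$. Comparing $a$ and $b$ modulo $q$ and cancelling $(h-d)m$ shows that collapse is the single condition $q\mid d$. So the number of collapsed triples with largest entry at most $N$ is at most
\[
\sum_{h\le N^{1/2}}\ \sum_{1\le j<h/q}\frac{N}{h(h+qj)}\ \ll\ \frac{N\log N}{q}.
\]
The three entries of a collapsed triple share one inverse residue modulo $q$. Such a triple therefore survives the filter with probability exactly $\delta=|S|/q$: one event, not three. Deleting one element from each surviving triple costs, in expectation, at most $c_2\,\delta N\log N/q$, against $\mathbb E|A_0|\ge c_1\delta N$.

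You are right that this loss is not $o(\delta N)$ when $q\asymp\log N$, but it need not be. The comparison is only up to constants, and you win it by taking $q\ge K\log N$ with $K>2c_2/c_1$. Your choice $q\le c\log N$ with $c$ small goes exactly the wrong way, since it makes $N\log N/q$ exceed $N$. Your stated reason for abandoning the union bound is also inconsistent: the quantity you display, $N\log N/q^2\cdot\delta^3\asymp N\delta^3/\log N$, is in fact $o(N\delta)$.

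Your fallback ideas do not fill the gap. Averaging over primes $q$ in the window, using only the worst-case bound $O(\log N/\log\log N)$ on the prime factors of a difference, gains at most a constant factor depending on $c$ per triple. With $\asymp N\log N$ odd harmonic triples that is far too weak. Restricting to integers without small prime factors is not shown to cut the triple count enough at a cost of only $(\log\log N)^{O(1)}$.

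As written, the proposal does not prove the theorem. The missing ingredients are the $O(N\log N/q)$ count of collapsed triples, their survival probability $\delta$, and a large constant in $q\asymp\log N$.
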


The proof is direct.  Choose a prime $q\asymp\log N$ and a dense three-term-progression-free set $R\subseteq\F_q$.  A random affine image of $R$ is used to select those odd $n\le N$ for which $n^{-1}\bmod q$ lies in that affine image.  A harmonic triple surviving this filter either gives a nonconstant three-term progression in the affine image, which is impossible, or collapses modulo $q$.  A parametrization of odd harmonic triples shows that the collapsed triples have a difference parameter divisible by $q$, and there are only $O(N\log N/q)$ such triples.  Each collapsed triple survives the random filter with probability only $\delta=|R|/q$.  Since the initial random set has expected size $\gg \delta N$, choosing the implicit constant in $q\asymp\log N$ large enough leaves $\gg\delta N$ elements after deletion.

The complementary upper-bound problem remains open here.  It is natural to ask whether positive density is impossible.

\begin{question}\label{ques:density-zero}
Is it true that
\[
  f(N)=o(N)
\]
as $N\to\infty$?
\end{question}

\section{Preliminaries}

We write $X\ll Y$ and $Y\gg X$ to mean $|X|\le C Y$ for an absolute constant $C$.  Constants implicit in $O(\cdot)$, $\ll$ and $\gg$ are absolute unless explicitly stated otherwise.

A subset $R$ of an abelian group of odd order is called \emph{three-term-progression-free} if there are no $x,y,z\in R$, not all equal, such that $x+z=2y$.

\begin{lemma}[Progression-free subsets of prime fields]\label{lem:finite-field-input}
Let $q$ be prime and let $q\to\infty$.  There is a three-term-progression-free set $R\subseteq\F_q$ such that
\[
  \frac{|R|}{q}\ge \exp\!\left(-(C_{\AP}+o(1))\sqrt{\log q}\right),
  \qquad C_{\AP}=2\sqrt{\log(24/7)} .
\]
\end{lemma}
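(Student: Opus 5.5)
The plan is to transfer the integer bound \eqref{eq:EHPS-input} to $\F_q$ by taking an AP-free set of integers that is short compared with $q$, so that no wraparound modulo $q$ can occur. Let $X=\lfloor (q-1)/2\rfloor$, so $X\ge 1$ once $q\ge 3$, and $X\to\infty$ with $q$. By \eqref{eq:EHPS-input} there is a set $S\subseteq\{1,\ldots,X\}$ with no nontrivial three-term arithmetic progression and
\[
|S|\ge X\exp\!\left(-(C_{\AP}+o(1))\sqrt{\log X}\right).
\]
Let $R\subseteq\F_q$ be the image of $S$ under reduction modulo $q$. Since $X<q$, reduction is injective on $\{1,\ldots,X\}$, and therefore $|R|=|S|$.

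Next I check that $R$ is three-term-progression-free in $\F_q$. Suppose $x,y,z\in R$ are not all equal and satisfy $x+z=2y$ in $\F_q$. Lift them to $a,b,c\in S$. Then $a+c-2b\equiv 0\pmod q$. Since $a,b,c\in[1,X]$, we have $|a+c-2b|\le 2(X-1)<q$, so $a+c=2b$ holds in $\mathbb Z$. Because reduction is injective on $[1,X]$, the integers $a,b,c$ are not all equal. An integer solution of $a+c=2b$ with $a,b,c$ not all equal is automatically a nonconstant progression, since $a=c$ would force $b=a$. This contradicts the choice of $S$. The definition used in the lemma also counts the case $a=c\ne b$, but that case is impossible in $\mathbb Z$, so nothing more is needed.

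The last step is the density bookkeeping. We have $|R|/q\ge (X/q)\exp(-(C_{\AP}+o(1))\sqrt{\log X})$ with $X/q\ge 1/3$ for $q\ge 3$. Also $\log X\le\log q$, so $\sqrt{\log X}\le\sqrt{\log q}$. The factor $1/3$ equals $\exp(-\log 3)$, and $\log 3=o(\sqrt{\log q})$, so it is absorbed into the $o(1)$ in the exponent. The $o(1)$ coming from \eqref{eq:EHPS-input} is taken as $X\to\infty$, which is the same as $q\to\infty$. One could worry about the sign of that $o(1)$ term, but replacing $\sqrt{\log X}$ by $\sqrt{\log q}$ only makes the bound weaker, so the conclusion holds after renaming the $o(1)$.

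I see no real obstacle here. The only care needed is choosing $X\approx q/2$ rather than $X\approx q$, so that $2b-a-c$ cannot be a nonzero multiple of $q$. This loses only a constant factor in density, which is negligible on the $\exp(\sqrt{\log q})$ scale.
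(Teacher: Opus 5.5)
Your proposal is correct and is essentially the paper's own argument: take an EHPS set in a short initial interval, note that $|x+z-2y|<q$ rules out wraparound, and absorb the constant-factor density loss into the $o(1)$. The only cosmetic difference is that you use $X=\lfloor (q-1)/2\rfloor$ where the paper uses $M=\lfloor q/3\rfloor$; both choices work equally well.
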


\begin{proof}
Let $M=\lfloor q/3\rfloor$.  By \eqref{eq:EHPS-input}, there is a set $S\subseteq\{1,\ldots,M\}$ with no nontrivial three-term arithmetic progression and
\[
  |S|\ge M\exp\!\left(-(C_{\AP}+o(1))\sqrt{\log M}\right).
\]
Embed $S$ into $\F_q$ in the natural way and call the image $R$.  This embedding creates no wrap-around progressions.  Indeed, if $x,z,y\in S$ and $x+z\equiv 2y\pmod q$, then
\[
  |x+z-2y|<q,
\]
so $x+z=2y$ over the integers.  Hence $x=y=z$.  Finally, $M/q\ge 1/4$ for all sufficiently large $q$, and $\log M=\log q+O(1)$.  Multiplying by the fixed factor $M/q$ changes the exponent by only $O(1)$, which is $o(\sqrt{\log q})$.  This proves the asserted density bound.
\end{proof}

We shall also use the following elementary parametrization.

\begin{lemma}[Parametrization of odd harmonic triples]\label{lem:parametrization}
Let $b<a<c$ be odd positive integers satisfying
\begin{equation}\label{eq:param-equation}
  \frac2a=\frac1b+\frac1c .
\end{equation}
Then there exist positive integers $m,h,d$ with $1\le d<h$ such that
\begin{equation}\label{eq:parametrization}
  b=(h-d)hm,\qquad a=(h-d)(h+d)m,\qquad c=(h+d)hm .
\end{equation}
Conversely, every choice of positive integers $m,h,d$ with $1\le d<h$ gives a positive integer solution of \eqref{eq:param-equation} through \eqref{eq:parametrization}; it need not be an odd solution.  Moreover, if $q$ is an odd prime, $q\nmid abc$, and
\begin{equation}\label{eq:equal-congruence-param}
  a\equiv b\equiv c\pmod q,
\end{equation}
then $q\mid d$.
\end{lemma}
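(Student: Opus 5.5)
The plan is to clear denominators and reduce to a coprime pair. The equation \eqref{eq:param-equation} is equivalent to $a(b+c)=2bc$, i.e. $a=2bc/(b+c)$. Write $g=\gcd(b,c)$, $b=gu$, $c=gv$ with $\gcd(u,v)=1$ and $u<v$ (since $b<c$). Then
\[
a=\frac{2guv}{u+v}.
\]
Since $\gcd(u,v)=1$, we have $\gcd(uv,u+v)=1$: any prime dividing $u$ and $u+v$ also divides $v$. Hence the integrality of $a$ forces $(u+v)\mid 2g$.

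Next we use oddness. Because $b$ and $c$ are odd, so are $u$ and $v$. Thus $h=(u+v)/2$ and $d=(v-u)/2$ are positive integers with $u=h-d$ and $v=h+d$. From $u<v$ we get $d\ge1$, and from $u\ge1$ we get $d<h$. The divisibility $(u+v)\mid 2g$ becomes $h\mid g$, so we may write $g=hm$ with $m$ a positive integer. Substituting gives $b=(h-d)hm$ and $c=(h+d)hm$. Also
\[
a=\frac{2hm\,(h-d)(h+d)}{2h}=(h-d)(h+d)m,
\]
which is \eqref{eq:parametrization}. The only point needing care is this integrality step, namely that $\gcd(uv,u+v)=1$ pushes all of the denominator into $g$, together with the use of oddness to halve $u+v$. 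Everything else is bookkeeping.

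For the converse, we verify directly that
\[
\frac{1}{(h-d)hm}+\frac{1}{(h+d)hm}=\frac{2h}{(h-d)(h+d)hm}=\frac{2}{(h-d)(h+d)m}.
\]
Positivity of all three numbers follows from $1\le d<h$. Parity is not controlled, which matches the remark in the statement.

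For the final claim, we compute from \eqref{eq:parametrization} that
\[
a-b=(h-d)m\bigl((h+d)-h\bigr)=(h-d)md.
\]
If $a\equiv b\pmod q$, then $q\mid (h-d)md$. Since $q\nmid b=(h-d)hm$, the prime $q$ divides neither $h-d$ nor $m$. Therefore $q\mid d$. Only the congruence $a\equiv b$ and the hypothesis $q\nmid b$ are actually needed here.
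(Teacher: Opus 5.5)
Your proof is correct and follows the paper's argument essentially step for step. You use the same reduction $b=gu$, $c=gv$ with $\gcd(uv,u+v)=1$ forcing $(u+v)\mid 2g$, the same use of oddness to define $h,d$, and the same final step: your identity $a-b=(h-d)md$ is just the paper's cancellation of $(h-d)m$ written as a difference, and your remark that only $a\equiv b \pmod q$ and $q\nmid b$ are needed accurately reflects what the paper's argument actually uses.
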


\begin{proof}
Write $b=gx$ and $c=gy$, where $(x,y)=1$ and $x<y$.  Since $b$ and $c$ are odd, both $x$ and $y$ are odd.  From $2bc=a(b+c)$ we obtain
\[
  2gxy=a(x+y).
\]
Because $(xy,x+y)=1$, the integer $x+y$ divides $2g$.  Thus $2g=m(x+y)$ for some integer $m\ge 1$, and then $a=mxy$.  Set
\[
  h=\frac{x+y}{2},\qquad d=\frac{y-x}{2}.
\]
Then $h$ and $d$ are positive integers with $d<h$, and \eqref{eq:parametrization} follows from $x=h-d$, $y=h+d$, and $g=hm$.

The converse is verified by direct substitution:
\[
  \frac1{(h-d)hm}+\frac1{(h+d)hm}
  =\frac{2}{(h-d)(h+d)m}.
\]
Finally assume \eqref{eq:equal-congruence-param}.  Since $q\nmid abc$, each factor $h-d$, $h$, $h+d$ and $m$ is nonzero modulo $q$.  Comparing $a$ and $b$ modulo $q$ in \eqref{eq:parametrization} and cancelling the common nonzero factor $(h-d)m$ gives
\[
  h+d\equiv h\pmod q,
\]
so $q\mid d$.
\end{proof}

\section{The Construction}

The following lemma is the main construction.

\begin{lemma}[Cardinality construction]\label{lem:cardinality-core}
Let $Y$ be large.  There is a set $B\subseteq [Y]$ containing no distinct $a,b,c$ satisfying \eqref{eq:harmonic-equation} and such that
\begin{equation}\label{eq:core-cardinality}
  |B|
  \gg
  Y\exp\!\left(-(C_{\AP}+o(1))\sqrt{\log\log Y}\right).
\end{equation}
\end{lemma}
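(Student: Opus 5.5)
We will implement the random-filter construction sketched in the introduction. Let $K$ be a large absolute constant, to be fixed at the end. By Bertrand's postulate, pick a prime $q$ with $K\log Y\le q\le 2K\log Y$. Apply Lemma~\ref{lem:finite-field-input} to get a three-term-progression-free $R\subseteq\F_q$, and set $\delta=|R|/q$. Since $\log q=\log\log Y+O_K(1)$, we have $\delta\ge \exp(-(C_{\AP}+o(1))\sqrt{\log\log Y})$. Choose $u\in\F_q^\times$ and $v\in\F_q$ independently and uniformly, and put $T=uR+v$. Define
\[
A=\{n\le Y:\ n\text{ odd},\ q\nmid n,\ n^{-1}\bmod q\in T\}.
\]
For each fixed residue $r\in\F_q$, the uniform shift $v$ gives $\Pr[r\in T]=\delta$. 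There are $\ge Y/2-Y/q-1\ge Y/3$ odd $n\le Y$ with $q\nmid n$, so $\mathbb E|A|\ge \delta Y/3$.

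Next we classify the harmonic triples surviving in $A$. Suppose $a,b,c\in A$ are distinct with $2/a=1/b+1/c$. Then $a$ lies strictly between $b$ and $c$; by symmetry take $b<a<c$. Reducing modulo $q$ gives $b^{-1}+c^{-1}=2a^{-1}$ in $\F_q$. Thus $(b^{-1},a^{-1},c^{-1})$ is a three-term progression inside $T$. Applying the inverse of the affine map $x\mapsto ux+v$ yields a progression in $R$, so the three residues are equal, i.e.\ $a\equiv b\equiv c\pmod q$. All of $a,b,c$ are odd and coprime to $q$, so Lemma~\ref{lem:parametrization} applies and shows that the triple has the form \eqref{eq:parametrization} with $q\mid d$. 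Call such triples \emph{collapsed}.

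We now count collapsed triples in $[Y]$ and bound how many survive. The conditions are $c=(h+d)hm\le Y$, $d=qk$ with $1\le d<h$, and $m\ge1$. For fixed $h$ there are fewer than $h/q$ choices of $d$, and at most $Y/h^2$ choices of $m$. Hence the number of collapsed triples is at most
\[
\sum_{h\le Y}\frac{h}{q}\cdot\frac{Y}{h^2}\ll \frac{Y\log Y}{q}.
\]
Since the three entries of a collapsed triple share one common inverse residue, the triple lies in $A$ with probability exactly $\delta$. So the expected number $Z$ of collapsed triples inside $A$ satisfies $\mathbb E Z\le C_0\delta Y\log Y/q\le C_0\delta Y/K$.

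Finally, delete one element of each collapsed triple contained in $A$, and call the result $B$. Every harmonic triple in $B$ would be a surviving triple of $A$, hence collapsed, hence destroyed; so $B$ has no harmonic triple. Moreover $\mathbb E|B|\ge \mathbb E|A|-\mathbb E Z\ge \delta Y(1/3-C_0/K)\ge \delta Y/6$ once $K\ge 6C_0$. Fixing an outcome with $|B|\ge\delta Y/6$ gives \eqref{eq:core-cardinality}.

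The main obstacle is the classification step. It needs two things: that the affine image of $R$ is still progression-free, which requires $u\ne0$; and that a surviving triple forces all three entries congruent, rather than only some of them. Together these let the collapsed triples be charged to the single probability $\delta$ and not to $\delta^2$ or $\delta^3$. The count $O(Y\log Y/q)$ is then a routine divisor-type sum.
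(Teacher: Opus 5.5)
Your proposal is correct and follows essentially the same route as the paper: a random affine filter on inverse residues modulo a prime $q\asymp K\log Y$, collapse of surviving triples via progression-freeness and Lemma~\ref{lem:parametrization}, an $O(Y\log Y/q)$ count of parameters with $q\mid d$ each surviving with probability $\delta$, and deletion with $K$ taken large. The only differences are cosmetic. You delete an arbitrary element per collapsed triple rather than the largest, and you sum over $h\le Y$ rather than $h\le Y^{1/2}$.
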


\begin{proof}
Let $K\ge 1$ be a sufficiently large absolute constant, to be fixed at the end of the proof.  By Bertrand's postulate, for all sufficiently large $Y$ there is a prime $q$ with
\begin{equation}\label{eq:q-choice}
  K\log Y\le q\le 2K\log Y .
\end{equation}
In particular, $q$ is odd and $\log q=\log\log Y+O(1)$.

Let $R\subseteq\F_q$ be supplied by Lemma~\ref{lem:finite-field-input}, and put
\[
  \delta=\frac{|R|}{q}.
\]
Thus
\begin{equation}\label{eq:delta-bound}
  \delta\ge \exp\!\left(-(C_{\AP}+o(1))\sqrt{\log\log Y}\right).
\end{equation}
For $\lambda\in\F_q^\times$ and $\mu\in\F_q$, write
\[
  R_{\lambda,\mu}=\lambda R+\mu .
\]
These affine images remain three-term-progression-free.

Let
\[
  V=\{n\le Y:n\text{ is odd and }q\nmid n\}.
\]
Then $|V|\gg Y$.  Choose $\lambda\in\F_q^\times$ and $\mu\in\F_q$ uniformly at random, and define
\[
  B_0=B_0(\lambda,\mu)=\{n\in V:n^{-1}\bmod q\in R_{\lambda,\mu}\}.
\]
For each fixed nonzero residue $r\in\F_q$, the probability that $r\in R_{\lambda,\mu}$ is exactly $\delta$.  Hence
\begin{equation}\label{eq:initial-expected-size}
  \mathbb E\!\left[|B_0|\right]=\delta |V|\gg \delta Y .
\end{equation}

We next delete the largest element of every surviving harmonic triple. For a fixed affine image $R_{\lambda,\mu}$, delete from $B_0$ every element $c$ that is the largest member of some triple $b<a<c$ in $B_0$ satisfying \eqref{eq:harmonic-equation}.  Call the remaining set $B=B(\lambda,\mu)$.  This set is harmonic-triple-free, since the largest element of any surviving harmonic triple would have been deleted.

It remains to bound the expected number of deletions.  Suppose $b<a<c$ is a harmonic triple in $B_0$.  Reducing \eqref{eq:harmonic-equation} modulo $q$ gives
\[
  2a^{-1}\equiv b^{-1}+c^{-1}\pmod q.
\]
The three residues $b^{-1},a^{-1},c^{-1}$ lie in the progression-free set $R_{\lambda,\mu}$, so they must all be equal.  Therefore
\begin{equation}\label{eq:abc-congruent}
  a\equiv b\equiv c\pmod q .
\end{equation}
Since $b,a,c\in V$, they are odd and not divisible by $q$.  By Lemma~\ref{lem:parametrization}, every such triple has the form \eqref{eq:parametrization} with $q\mid d$.

Let
\[
  U_q(Y)=
  \#\{(h,d,m):1\le d<h,\ q\mid d,\ h(h+d)m\le Y\}.
\]
This is an overcount for the possible triples whose largest element can be deleted.  If a single element $c$ is deleted because of several triples, then counting all of those triples only increases the upper bound.  We have
\begin{align}
  U_q(Y)
  &\le
  \sum_{h\le Y^{1/2}}
  \sum_{\substack{1\le d<h\\ q\mid d}}
  \left\lfloor \frac{Y}{h(h+d)}\right\rfloor \notag\\
  &\le
  Y\sum_{h\le Y^{1/2}}
  \sum_{1\le j<h/q}
  \frac1{h(h+qj)} \notag\\
  &\ll
  Y\sum_{h\le Y^{1/2}}\frac1{qh}
  \ll \frac{Y\log Y}{q}.\label{eq:Uq-bound}
\end{align}
For each parametrized triple counted by $U_q(Y)$, the probability that it contributes to a deletion is at most $\delta$.  Indeed, if the displayed integers are not all odd, or if one of them is divisible by $q$, then this probability is zero.  Otherwise $q\mid d$ implies that the three entries are congruent to the same nonzero residue modulo $q$, so their inverse residues are also equal; the probability that this common inverse residue lies in $R_{\lambda,\mu}$ is exactly $\delta$.  By the union bound, the expected number of deleted elements is at most $\delta U_q(Y)$.

Combining \eqref{eq:initial-expected-size}, \eqref{eq:Uq-bound}, and \eqref{eq:q-choice}, we get
\[
  \mathbb E\!\left[|B|\right]
  \ge
  \delta\left(c_1Y-c_2\cdot\frac{Y\log Y}{q}\right)
  \ge
  \delta\left(c_1Y-\frac{c_2}{K}\cdot Y\right)
\]
for absolute constants $c_1,c_2>0$.  Taking $K>2c_2/c_1$, the expectation is $\gg \delta Y$.  Therefore some affine image yields a set $B$ satisfying \eqref{eq:core-cardinality}, by \eqref{eq:delta-bound}.
\end{proof}

\begin{proof}[Proof of Theorem~\ref{thm:main}]
Apply Lemma~\ref{lem:cardinality-core} with $Y=N$.  The resulting set is a subset of $[N]$, contains no distinct solution of \eqref{eq:harmonic-equation}, and satisfies \eqref{eq:main-bound}.  Hence $f(N)\ge |B|$ gives the theorem.
\end{proof}

\end{document}